\documentclass[11pt]{article}
\usepackage{mymacros}

\newcommand{\dsspaper}{}
\newcommand{\DSS}{$\mathrm{DSS}$}

\author{
  Omar Abdelghani\footnote{Courant Institute of Mathematical Sciences, New York University. E-mail: {\tt oa2391@nyu.edu}.}
  \and Roland Bauerschmidt\footnote{Courant Institute of Mathematical Sciences, New York University. E-mail: {\tt bauerschmidt@cims.nyu.edu}.}
  \footnote{Current address: Institut f\"ur Angewandte Mathematik, Universit\"at Bonn. E-mail: {\tt bauerschmidt@uni-bonn.de}.}
  \and Thierry Bodineau\footnote{I.H.E.S., Universit\'e Paris-Saclay, CNRS, Laboratoire Alexandre Grothendieck. 
 E-mail: {\tt bodineau@ihes.fr}.}
  \and Benoit Dagallier\footnote{Department of Mathematics, Imperial College London. E-mail: {\tt b.dagallier@imperial.ac.uk}.}
}

\title{The Ding-Song-Sun inequality for a class of even ferromagnets}
\date{September 22, 2026}

\begin{document}

\setcounter{page}{0}
\newpage

\maketitle
\begin{abstract}
Ding-Song-Sun (DSS) proved the remarkable correlation inequality that the truncated two-point correlation
function of ferromagnetic Ising models with arbitrary mixed-sign external field is largest when the
field vanishes. This inequality has various important consequences such as log-Sobolev inequalities
for Ising and $\varphi^4$ models up to and at the critical point.
Using the maximum principle for a version of the Polchinski flow equation,
we extend the DSS inequality to the class of single spin measures satisfying the GHS condition.
The method also extends to various other correlation inequalities such as the FKG and GHS inequalities.
\end{abstract}

\begin{center}
  \emph{Dedicated to Chuck Newman on the occasion of his 80th birthday.}
\end{center}

\section{Ding--Song--Sun inequality for potentials in the GHS class}
\label{sec:DSS}


The goal is to prove that,
for all finite sets $\Lambda$, all ferromagnetic matrices $A\in\R^{\Lambda\times\Lambda}$, all
even potentials $V=V_x:\R\to\R$ ($x\in\Lambda$) satisfying the 
Griffiths--Hurst--Sherman (GHS) condition
\begin{equation}
\forall (g,h)\in\R\times\R_+,\qquad 
V'(g+h)-V'(g-h) \geq V'(h)-V'(-h)
,
\label{eq_prop_GHS}
\end{equation}
and which are unbounded above,
one has the Ding--Song--Sun (DSS) inequality:
\begin{equation}
\forall (g,h)\in\R^\Lambda \times \R_+^\Lambda,
\forall x\in\Lambda,\qquad 
\langle \sigma_x\rangle_{g+h}-\langle \sigma_x\rangle_{g-h} 
\leq 
\langle \sigma_x\rangle_{h}-\langle \sigma_x\rangle_{-h} 
.
\label{eq_prop_DSS}
\end{equation}
The expectation $\avg{\cdot}_u$ where $u\in\R^\Lambda$ is with respect to the following probability measure on $\R^\Lambda$:
\begin{equation}
\mu_{A,V}^{u}(d\sigma)
\propto
\exp\bigg[-\frac{1}{2}(\sigma,A\sigma) + (\sigma,u) - \sum_{x\in\Lambda}V_x(\sigma_x)\bigg]
\, d\sigma
,\qquad (u,v) = \sum_{x\in\Lambda} u_xv_x
.
\end{equation}
%
%
%
The matrix $A$ is ferromagnetic if $A_{xy} \leq 0$ for $x \neq y$.

\begin{theorem}\label{thm:main}
  Assume that $A$ is ferromagnetic, strictly positive definite, and that the $V=V_x \in C^3$ are symmetric, unbounded above, with $V'$ convex on $[0,\infty)$.
  Then the DSS inequality \eqref{eq_prop_DSS} holds.
\end{theorem}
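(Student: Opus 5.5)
We will prove the DSS inequality through the maximum principle for a Polchinski-type flow, as the abstract indicates. The idea is to interpolate the coupling matrix from a decoupled (diagonal) matrix to $A$, and to show that the DSS property is preserved along the flow.

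\textbf{Setting up the flow.} Since $A$ is strictly positive definite and ferromagnetic, we first shift it by a multiple of the identity (absorbing $\frac{s}{2}\sigma_x^2$ into $V_x$; this keeps $V_x$ symmetric and keeps $V_x'$ convex on $[0,\infty)$). After this shift we may write $A = D - C$ with $D$ diagonal and $C$ having nonnegative entries, and we may arrange that $C$ is positive semidefinite. We then take a path $C_t$, $t\in[0,1]$, with $C_0=0$, $C_1=C$, and $\dot C_t \ge 0$ entrywise and positive semidefinite; for instance $C_t = tC$. Define the renormalised potential
\[
e^{-V_t(\varphi)} \propto \int e^{-\frac12(\sigma,D\sigma)+(\sigma,\varphi)-\sum_x V_x(\sigma_x)}\,e^{\frac12(\sigma,C_t\sigma)}\,d\sigma .
\]
Up to constants, the map $u\mapsto \langle\sigma_x\rangle^{A}_u$ is then $\partial_x F$, where $F(u)=\log Z(u)$ solves a heat-type (Polchinski/Hamilton--Jacobi) equation
\[
\partial_t F = \tfrac12\big(\Delta_{\dot C_t}F + (\nabla F,\dot C_t\nabla F)\big).
\]
At $t=0$ the function $F_0$ is a sum of single-site functions $\log\int e^{u\sigma - \frac{d_x}{2}\sigma^2-V_x(\sigma)}d\sigma$.

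\textbf{Preservation of the DSS property.} Write $m=\nabla F$ and consider the quantity
\[
\Phi_x(g,h) = m_x(h)-m_x(-h)-m_x(g+h)+m_x(g-h),
\]
defined on $\R^\Lambda\times\R_+^\Lambda$. We want to show $\Phi_x\ge0$ for all $t$. At $t=0$ this is a one-site statement: the single-spin measure $e^{-d\sigma^2/2-V(\sigma)}$ satisfies the GHS condition, and one-site DSS for GHS measures should follow from concavity of $u\mapsto m(u)$ on $u\ge0$ together with oddness (this needs a short check). Differentiating in $t$, $m$ satisfies a parabolic system
\[
\partial_t m_x = \tfrac12\Delta_{\dot C}m_x + (\nabla m_x,\dot C m).
\]
Hence $\Phi_x$ satisfies a parabolic equation in the doubled variables $(g,h)$, with a drift term and a source term. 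The source should be written as a combination of $\Phi$'s, with nonnegative coefficients coming from $\dot C\ge0$ entrywise, plus terms whose sign is controlled by the GHS and Griffiths inequalities along the flow. With such a structure, a minimum-principle argument on $\R^\Lambda\times\R_+^\Lambda$ gives $\Phi\ge0$. The boundary condition on $h_y=0$ is that $\Phi$ is odd in $h_y$ directions; this suggests running the argument through doubled/duplicated variables $\sigma\pm\sigma'$, in the spirit of the Ginibre and Ding--Song--Sun duplication. The growth at infinity is controlled by the unboundedness of $V$, which gives bounded and regular $m$.

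\textbf{Main obstacle.} The hard step is showing that the nonlinear term $(\nabla m_x,\dot C m)$ produces, at a minimum point of $\Phi$, a contribution of the correct sign. This requires auxiliary inequalities propagated simultaneously along the flow: GHS-type concavity $\partial^2 m\le0$ on the positive cone, Griffiths $\partial m\ge0$, and monotonicity properties. We would therefore set up a coupled maximum principle for the whole family (FKG/Griffiths, GHS, DSS) and show that these properties are jointly preserved. As a fallback, we would approximate $A$ by a sequence of rank-one ferromagnetic updates $C\mapsto C+\epsilon e_{xy}$ and verify preservation step by step.
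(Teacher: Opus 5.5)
Your interpolation $A_t=D-tC$ and the heat-type equation for $F=\log Z$ are fine. But the proposal stops exactly where the proof has to happen, and the remedy you suggest misdiagnoses the difficulty: a coupled maximum principle propagating GHS concavity, Griffiths and DSS jointly is not needed. The missing idea is the doubling that the paper applies to $V_t(u,v)=v_t(u+v)+v_t(u-v)$. In your variables, set $D(g,h)=m(g+h)-m(g-h)$ and $S(g,h)=m(g+h)+m(g-h)$, so that $\Phi=D(0,h)-D(g,h)$. The diffusion terms combine into an $h$-Laplacian, and oddness of $m$ gives $S(0,h)=0$. A direct computation from $\partial_t m_x=\frac12\Delta_{\dot C}m_x+(\nabla m_x,\dot C m)$ then gives the closed linear system
\[
\partial_t\Phi_x=\frac12\,\mathrm{tr}\big(\dot C\,\nabla_h^2\Phi_x\big)+\frac12(\nabla_h\Phi_x,\dot C D)+\frac12(\nabla_g\Phi_x,\dot C S)+\sum_{z}(\Sigma_h\dot C)_{xz}\,\Phi_z ,
\]
where $\Sigma_h=(\langle\sigma_x;\sigma_y\rangle_h)_{x,y}$. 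Since $\Sigma_h\ge0$ entrywise by FKG and $\dot C\ge0$ entrywise, the system is cooperative. No GHS information is needed along the flow; GHS enters only through the initial condition. This is the content of Lemmas~\ref{lem_Polchinski_W} and~\ref{lem_V_FKG}. Without it, your proposal contains no argument for the interior.

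The second gap is the boundary $\{h_y=0\}$. Your claim that $\Phi$ is odd in each direction $h_y$ is false: $\Phi$ is odd only under the global flip $h\mapsto -h$. On the face $\{h_y=0\}$ it is the DSS quantity for a nonnegative field supported on $\Lambda\setminus\{y\}$, which is generally nonzero and whose sign is part of what has to be proved. So the minimum principle on $\R^\Lambda\times\R^\Lambda_+$ needs it as input.

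The paper supplies this by a double induction. First, the heat flow acts only in the directions of $P$, so the equation closes on $\{h_{\Lambda\setminus P}=0\}$ and each face reduces to $\mathrm{DSS}(P',\Lambda)$ with $P'\subsetneq P$ and modified couplings (Lemma~\ref{lem_DSS_boundary}). Second, at $t=0$ the spins in $P$ are pinned. The initial condition then reduces to the GHS condition on $V$ plus DSS on the smaller volume $P^c$ for the fields $\tilde A g,\tilde A h$ with $\tilde A\ge0$ (Lemma~\ref{lem_DSS_initial_P}).

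In your scheme $\dot C$ diffuses in all $h$-directions, so the equation does not close on lower-dimensional faces and no induction on the support of $h$ is available. Restricting $\dot C$ to $P\times P$ would restore closure, but then the $t=0$ model is no longer decoupled, and you would need a substitute for the paper's pinning.

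Two smaller points. One-site DSS at $t=0$ requires the one-site GHS inequality (concavity of the magnetisation, Ellis--Monroe--Newman), not just a short check; in the paper the $t=0$ condition is literally \eqref{eq_prop_GHS}. And $m$ is not bounded for unbounded spins, so the growth hypotheses of the maximum principle need the regularisation of Lemma~\ref{lem:reg}.
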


\begin{remark}\label{rk:GHS}
The condition~\eqref{eq_prop_GHS} is equivalent to that of the GHS class of even, unbounded
potentials $V:\R\to\R$ for which $V'$ is convex on $[0,\infty)$, introduced in \cite{MR395659}. 
Indeed, by symmetry, \eqref{eq_prop_GHS} is equivalent to
\begin{equation}
\forall (g,h)\in\R\times\R_+,\qquad 
V'(g+h)+V'(h - g) \geq 2 V'(h).
\label{eq_prop_GHS symmetric}
\end{equation}
This implies that $V''(h) \geq 0$ by letting $g$ tend to 0. For the reverse implication,
note that, if $g \in [0,h]$, then \eqref{eq_prop_GHS symmetric} is a consequence of the convexity of $V'$ on $[0,\infty)$.
For $g \geq h$, \eqref{eq_prop_GHS} can be rewritten as 
\begin{equation}
V'(g+h) - V'(g) + V'(g)  -V'(g-h) \geq 2 (V'(h)-V'(0)),
\end{equation}
which also follows from the convexity of $V'$ on $[0,\infty)$.
\end{remark}

\ifdefined\dsspaper
In \cite{LSIShG}, we apply our DSS inequality to show that the massless sinh-Gordon model in $\R^2$
satisfies a uniform log-Sobolev inequality, by applying the method of \cite{MR4720217,MR4303014,MR4798104}.
\fi

For Ising models (and by approximation $\varphi^4$ models), the DSS inequality was first proved in a remarkable article
by Ding, Song, and Sun \cite{MR4586225}.
The GHS class was identified by Ellis, Monroe, and Newman as the necessary and sufficient
condition for the concavity of the magnetisation for all ferromagnetic couplings $A$ in \cite{MR395659}.
The concavity of the magnetisation implies that the truncated two-point function is maximal at $0$ for constant external fields.
The DSS inequality implies the more general statement that this holds for arbitrary mixed-sign external fields \cite{MR4586225}:
\begin{equation}
\label{eq: 2 point DSS estimate}
  \forall g\in\R^\Lambda,
  \forall x,y\in \Lambda, \qquad
  \langle \sigma_x;\sigma_y\rangle_g
  \leq \langle \sigma_x;\sigma_y\rangle_0,
\end{equation}
where $\langle \cdot;\cdot\rangle_u$ denotes the covariance with respect to the probability measure $\mu_{A,V}^u$.

Our proof of Theorem~\ref{thm:main} relies on the parabolic maximum principle applied to a variant of the Polchinski (renormalisation group) equation;
see \cite{MR4798104} for an introduction.
The idea is to derive a parabolic partial differential equation for a function $W_t$
so that the DSS inequality \eqref{eq_prop_DSS} is encoded as $W_1 \geq 0$  
and the assumption \eqref{eq_prop_GHS} in the initial condition as $W_0 \geq 0$.
This equation for $W$ is related to the Polchinski equation for the doubled system.
The DSS inequality then follows from the maximum principle which implies $W_t \geq 0$ for all $t$.
The same strategy works (more easily) for other correlation inequalities such as the FKG and GHS inequalities
(without duplication of variables as in the standard proof of the GHS inequality);
these applications will be presented elsewhere.

\subsection{Regularisation}
\label{sec: Regularisation}

The following lemma shows that
it suffices to prove the DSS inequality under the additional regularity assumption that $V''$ is bounded.

\begin{lemma} \label{lem:reg}
Let $V$ be as in Theorem~\ref{thm:main}. 
Then,
for any $R>0$ large enough, there is an even function $V_R \in C^3$ that is unbounded above, with $V'_R$ convex on $[0,\infty)$ such that the following holds: 
$V_R''$ is bounded, constant and strictly positive at $\infty$,  
and for any fixed external field $g \in \R^\Lambda$, 
\begin{equation}
   \lim_{R\to\infty}\E_{\mu_{A,V_R}^g}[\sigma_x] 
   =
   \E_{\mu_{A,V}^g}[\sigma_x]
   ,\qquad 
   x\in\Lambda
   .
   \label{eq_limit_VR}
\end{equation}
\end{lemma}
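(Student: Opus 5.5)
The plan is to build $V_R$ by freezing the convex function $V'$ at a large level $R$ and continuing it linearly, with a smoothing step so that $V_R\in C^3$. Then I pass to the limit by dominated convergence.

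\emph{Construction.} On $[0,\infty)$, $V'$ is convex, and by Remark~\ref{rk:GHS} $V''\ge 0$ there. Since $V$ is even and unbounded above, and $V'$ is nondecreasing on $[0,\infty)$, we have $V'(s)\to c>0$ or $V'(s)\to+\infty$. In either case $V'(s)>0$ for $s$ large, and in fact $V'(s)\ge \delta s-C$ fails only if $V'$ is bounded; that case is handled below.

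Fix a nonnegative, nondecreasing $C^\infty$ cutoff $\chi_R$ on $[0,\infty)$ with $\chi_R=0$ on $[0,R]$ and $\chi_R=1$ on $[R+1,\infty)$. On $[0,\infty)$ define
\[
V_R''(s)=(1-\chi_R(s))\,V''(s)+\chi_R(s)\,\bigl(V''(R+1)+1/R\bigr),
\]
and extend $V_R''$ evenly. Set $V_R'(0)=0$, take $V_R'$ odd, and let $V_R$ be even with $V_R(0)=V(0)$.

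This is the wrong interpolation for convexity, so I modify it. Convexity of $V_R'$ on $[0,\infty)$ means exactly that $V_R''$ is nondecreasing there. So I instead take
\[
V_R''=\min\{V'',\,M_R\}+\epsilon_R\,\phi_R,
\]
suitably smoothed near the crossover, where $M_R=V''(R)$ and $\phi_R$ is a nondecreasing smooth function vanishing on $[0,R]$ and equal to $1$ on $[R+1,\infty)$. A minimum of nondecreasing functions is nondecreasing, so the smoothing must be done with a monotone smooth minimum, for instance
\[
\min_\eta(a,b)=-\eta\log\bigl(e^{-a/\eta}+e^{-b/\eta}\bigr),
\]
which is nondecreasing in each argument and $C^\infty$. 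This keeps $V_R''$ nondecreasing and $C^1$, hence $V_R\in C^3$. It also makes $V_R''$ bounded and constant at infinity, equal to $M_R+\epsilon_R>0$ up to the smoothing, and I choose $\epsilon_R>0$ small so that the limit is strictly positive. The behaviour at infinity is then quadratic growth, so $V_R$ is unbounded above.

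For $R$ large, and on $[0,R/2]$ say, $V_R''=V''$ up to an error from the smooth minimum that goes to zero as $\eta\to0$. If $V''$ grows, then $V''\le M_R$ on $[0,R]$ by monotonicity, and choosing $\eta=\eta_R\to0$ gives $V_R\to V$ locally uniformly.

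\emph{Limit.} For fixed $g$, the densities $\exp[-\tfrac12(\sigma,A\sigma)+(\sigma,g)-\sum V_{R,x}(\sigma_x)]$ converge pointwise. I need a uniform integrable majorant. Because $V_R''\ge 0$ near $0$ up to the small smoothing error, $V_R$ is bounded below by $V(0)-o(1)$. The Gaussian factor comes from $A$, which is strictly positive definite, so it dominates $e^{(\sigma,g)}$ and also controls the polynomial factor $|\sigma_x|$. Hence $e^{-\frac12(\sigma,A\sigma)+(\sigma,g)+C}(1+|\sigma_x|)$ is integrable and serves as the majorant. Dominated convergence then applies to both the numerator and the partition function, giving \eqref{eq_limit_VR}.

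\emph{Main obstacle.} The delicate step is the construction itself: keeping $V_R''$ nondecreasing on $[0,\infty)$ while smoothing to $C^1$ at the cap, preserving evenness and $C^3$ regularity at the origin, and ensuring a strictly positive limit even when $V''\equiv0$ near infinity. The remaining issues at the origin are handled as follows.

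\begin{itemize}
\item The odd extension of $V_R'$ is $C^2$ at $0$ provided $V_R'''(0^+)=0$. This is inherited from $V'''(0)=0$, which holds because $V$ is even and $C^3$, so $V'''$ is odd. Away from the cap the smoothing leaves $V_R''=V''$ up to an additive $O(\eta)$ term, so the derivative there is unchanged.
\item To avoid a lower bound failing near $0$, I use the softer cap $\min_\eta$ built with $\log$-sum-exp shifted by $\eta\log 2$. This keeps $V_R''\ge 0$ whenever $\min(a,b)\ge0$ exactly.
\end{itemize}
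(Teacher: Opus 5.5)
Your overall strategy (cap $V''$ monotonically at a constant level beyond $R$, use $V'''(0)=0$ to keep $V_R$ even and $C^3$, then apply dominated convergence) is the paper's. The paper sets $V_R'''=V'''\chi(\sigma/R)$, so that $V_R=V$ on $[-R,R]$ and $V_R''$ is exactly constant on $[2R,\infty)$. However, two steps fail as written.

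\textbf{First, $V_R''$ is not constant at infinity.} The log-sum-exp cap does not give this property. The map $s\mapsto\min_\eta(V''(s),M_R)$ is strictly increasing wherever $V'''>0$; for $V=\sigma^4$ it is never constant. So ``constant up to the smoothing'' gives only a limit, not the property claimed in the statement. The interpolation you discarded is in fact correct. On $[R,R+1]$ its derivative is $(1-\chi_R)V'''+\chi_R'\,(V''(R+1)+1/R-V'')\ge 0$, because $V''$ is nondecreasing. It gives $V_R=V$ on $[-R,R]$ and an exactly constant $V_R''$ on $[R+1,\infty)$, which is essentially the paper's construction.

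\textbf{Second, the majorant rests on $V''\ge 0$, which the hypotheses do not give.} Letting $g\to 0$ in \eqref{eq_prop_GHS symmetric} only yields $V'''(h)\ge 0$: the first-order Taylor terms cancel, and this is just convexity of $V'$. Double-well potentials $V(\sigma)=\sigma^4-a\sigma^2$ with $a>0$ belong to the class and have $\inf V<V(0)$. Likewise, $V'$ need not be nondecreasing on $[0,\infty)$. So your bound ``$V_R\ge V(0)-o(1)$'' is false.

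The uniform lower bound you need is still true, but it requires a different argument. Since $V''$ is nondecreasing and $V$ is unbounded above, $V''(s_1)>0$ for some $s_1$. Hence $V'$ grows linearly, $V$ is coercive, and $\inf V>-\infty$. For a construction with $V_R=V$ on $[0,R]$ and $V_R''\ge V''(R)>0$ beyond, one gets $V_R'\ge V'(R)>0$ on $[R,\infty)$. Therefore $V_R\ge\inf V$ for all large $R$. With your unshifted smooth minimum you would also have to absorb the $O(\eta_R s^2)$ error into the growth of $V$, which is quadratic uniformly in $R$. The same observation shows that your side cases, $V'\to c$ and $V''\equiv 0$ near infinity, cannot occur.
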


\begin{proof}
By replacing $V(\sigma)$ by $V(\sigma)+\epsilon \sigma^2$ if needed and taking a limit, we can assume that $V''$ is strictly positive at $\infty$.
Let $\chi$ be a smooth nonnegative function such that
$\chi(t)=1$ for $t\leq 1$ and $\chi(t)=0$ for $t\geq 2$.
For $\sigma\geq 0$, set
\begin{equation}
  V_R'''(\sigma) = V'''(\sigma)\chi(\sigma/R)
  .
\end{equation}
Define $V_R$ for $\sigma>0$ by
\begin{align}
  V_R''(\sigma) &= V''(0)+\int_0^\sigma V_R'''(\tau) \, d\tau,
  \\
  V_R'(\sigma) &= \int_0^\sigma V_R''(\tau) \, d\tau,
  \\
  V_R(\sigma) &= V(0) + \int_0^\sigma V_R'(\tau)\, d\tau,
\end{align}
and extend the definition to $\sigma\leq 0$ by $V_R(\sigma) = V_R(-\sigma)$.
Then $V_R$ agrees with $V$ on $[-R,R]$, is $C^3$ and $V_R''$ is bounded. 
Since $V''$ is increasing and strictly positive at $\infty$,
the same holds for $V_R''$ for sufficiently large $R$.
Finally, the limit~\eqref{eq_limit_VR} then follows from dominated convergence.  
\end{proof}

\subsection{DSS via a Polchinski-type equation}
\label{sec: DSS as a parabolic partial differential equation}

Our goal is to reformulate the DSS inequality \eqref{eq_prop_DSS} as $W_1 \geq 0$ for a suitable function $W_t$
with the condition \eqref{eq_prop_GHS} encoded as  $W_0 \geq 0$ and derive a parabolic partial differential equation for $W_t$. 
The DSS property will then be obtained using the maximum principle to show $W_t\geq 0$ for all times. 

A technical point is that this function $W$ will be defined on an unbounded domain with boundary.
At first reading one should think of the set $P \subset \Lambda$ below as equal to $\Lambda$.
The generalization to $P \subset \Lambda$ is needed for an induction argument, in Section~\ref{sec_induction}, 
to handle boundary values in the application of the maximum principle.
For a set $I\subset \Lambda$, we will use the notation
\begin{equation}
|\sigma|^2_I = \sum_{x\in I}\sigma_x^2,\qquad
\nabla^I=(\partial_x)_{x\in I},\qquad \Delta^I=\sum_{x\in I}\partial^2_x
\end{equation}
for the Euclidean norm, gradient, and Laplacian with respect to coordinates in that set. 

\begin{definition}
For $t \in (0,1]$ and $u\in\R^\Lambda$, for $P \subset \Lambda$, define the renormalised potential
\begin{equation}
v_t^{P,\Lambda}(u)
=
-\log \int_{\R^{\Lambda}} t^{-|P|/2}
\exp\bigg[ -\frac{|\sigma-u|^2_P}{2t} - \frac{|\sigma-u|^2_{\Lambda\setminus P}}{2}
-\frac{1}{2}(\sigma,A\sigma)-\sum_{x\in\Lambda} U_x(\sigma_x)\bigg] \, d\sigma
,
\label{eq_def_half_potential_DSS}
\end{equation}
where $U_x(\sigma_x) = V_x(\sigma_x) - \frac12 \sigma_x^2$,
and the corresponding fluctuation measure:
\begin{equation}
\langle\cdot\rangle_{u,t}^{P,\Lambda}
\propto
\int_{\R^{\Lambda}}t^{-|P|/2}
\exp\bigg[ -\frac{|\sigma-u|^2_P}{2t} - \frac{|\sigma-u|^2_{\Lambda\setminus P}}{2}
-\frac{1}{2}(\sigma,A\sigma)-\sum_{x\in\Lambda} U_x(\sigma_x)\bigg] (\cdot) \, d\sigma
.
\label{eq_def_product_measure_DSS}
\end{equation} 
Define also the renormalised potential for the doubled system:
\begin{equation}
V_t^{P,\Lambda}(u,v)
=
v_t^{P,\Lambda}(u+v) + v_t^{P,\Lambda}(u-v)
.
\label{eq_def_potential_DSS}
\end{equation}
For $t= 0$, the Gaussian measure is interpreted as degenerate and supported on $\{u_P\} \times \R^{\Lambda\setminus P}$.
\end{definition}

By symmetry of the GHS potential $V$ and thus of $U$, the renormalised potentials $v_t^{P,\Lambda}$ and $V_t^{P,\Lambda}$ are both symmetric for any $t \geq 0$.

Write $\id^P_{t}$ for the diagonal matrix with entries $t^{-1}{\bf 1}_P + {\bf 1}_{\Lambda\setminus P}$. 

\begin{lemma}\label{lemm_derivatives_V}
For each $u,v\in\R^{\Lambda}$ and $t \in (0,1]$,
\begin{align}
\nabla_1V^{P,\Lambda}_t(u,v)
&=
2\id^P_t u - \id^P_t\langle  \sigma\rangle_{u+v,t}^{P,\Lambda} - \id^P_t\langle  \sigma\rangle_{u-v,t}^{P,\Lambda},
\nnb 
\nabla_2V^{P,\Lambda}_t(u,v)
&=
2\id^P_t v - \id^P_t\langle  \sigma\rangle_{u+v,t}^{P,\Lambda} + \id^P_t\langle  \sigma\rangle_{u-v,t}^{P,\Lambda},
\label{eq_first_derivatives_DSS}
\\
\nabla^2_1V^{P,\Lambda}_t(u,v)
&=
2\id^P_t - \id^P_t \Big(\langle \sigma_x;\sigma_y\rangle_{u+v,t}^{P,\Lambda}\Big)_{x,y}\id^P_t
- \id^P_t \Big(\langle \sigma_x;\sigma_y\rangle_{u-v,t}^{P,\Lambda}\Big)_{x,y}\id^P_t 
,
\nnb 
\nabla^2_2V^{P,\Lambda}_t(u,v)
&=
2\id^P_t - \id^P_t \Big(\langle \sigma_x;\sigma_y\rangle_{u+v,t}^{P,\Lambda}\Big)_{x,y}\id^P_t
- \id^P_t \Big(\langle \sigma_x;\sigma_y\rangle_{u-v,t}^{P,\Lambda}\Big)_{x,y}\id^P_t 
,
\label{eq_second_derivatives_DSS}
\end{align}
where $\nabla_1$ denotes the gradient vector in $u$ and $\nabla_1^2$ the Hessian matrix in $u$
(and analogously for $v$),
and the notation $\avg{\cdot;\cdot}$ denotes the covariance with respect to $\avg{\cdot}$.
\end{lemma}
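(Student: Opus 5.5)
The plan is to first compute the gradient and Hessian of the single-copy potential $v_t^{P,\Lambda}$, and then use the chain rule for $V_t^{P,\Lambda}(u,v)=v_t^{P,\Lambda}(u+v)+v_t^{P,\Lambda}(u-v)$.

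Write $Z_t(u)=e^{-v_t^{P,\Lambda}(u)}$ for the integral in \eqref{eq_def_half_potential_DSS}. The only $u$-dependence in the integrand is through the Gaussian factor
\[
-\tfrac{1}{2t}|\sigma-u|^2_P-\tfrac12|\sigma-u|^2_{\Lambda\setminus P}
=-\tfrac12\big(\sigma-u,\id^P_t(\sigma-u)\big).
\]
Its $u$-gradient is $\id^P_t(\sigma-u)$, and its $u$-Hessian is the constant matrix $-\id^P_t$.

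We may differentiate under the integral sign. With $V''$ bounded (by Lemma~\ref{lem:reg}, or simply because $V$ is bounded below after adding the Gaussian terms), the integrand and its $u$-derivatives are dominated by $C(1+|\sigma|^2)e^{-c|\sigma|^2}$ locally uniformly in $u$. This uses that $A$ is positive definite and that $V$ is bounded below, being even with $V'$ convex on $[0,\infty)$ and $V'(0)=0$.

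Differentiating once gives
\[
\nabla v_t^{P,\Lambda}(u)=-\frac{\nabla Z_t}{Z_t}=-\id^P_t\big(\langle\sigma\rangle^{P,\Lambda}_{u,t}-u\big)=\id^P_t u-\id^P_t\langle\sigma\rangle^{P,\Lambda}_{u,t}.
\]
Differentiating again, write $\nabla^2 v=-\nabla^2 Z/Z+(\nabla Z)(\nabla Z)^T/Z^2$. Here
\[
\nabla^2 Z/Z=\big\langle \id^P_t(\sigma-u)(\sigma-u)^T\id^P_t\big\rangle-\id^P_t .
\]
Subtracting the outer product of the means yields
\[
\nabla^2 v_t^{P,\Lambda}(u)=\id^P_t-\id^P_t\big(\langle\sigma_x;\sigma_y\rangle^{P,\Lambda}_{u,t}\big)_{x,y}\id^P_t,
\]
since the covariance of $\sigma-u$ equals the covariance of $\sigma$.

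For the chain rule, $\nabla_1 V=\nabla v(u+v)+\nabla v(u-v)$ and $\nabla_2 V=\nabla v(u+v)-\nabla v(u-v)$. Substituting the formula for $\nabla v$, the $\id^P_t$-linear terms combine to $2\id^P_tu$ and $2\id^P_tv$ respectively, which gives \eqref{eq_first_derivatives_DSS}. For the Hessians, both $\nabla_1^2V$ and $\nabla_2^2V$ equal $\nabla^2v(u+v)+\nabla^2v(u-v)$, because the sign in $u-v$ squares away. This gives \eqref{eq_second_derivatives_DSS}.

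The only step requiring care is justifying differentiation under the integral, i.e.\ the integrability bounds above. Everything else is routine Gaussian calculus.
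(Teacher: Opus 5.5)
Your proposal is correct and follows the paper's route: you compute $\nabla v_t^{P,\Lambda}$ and $\nabla^2 v_t^{P,\Lambda}$ by differentiating the Gaussian factor and apply the chain rule to $V_t^{P,\Lambda}(u,v)=v_t^{P,\Lambda}(u+v)+v_t^{P,\Lambda}(u-v)$, while also spelling out the Hessian and the differentiation under the integral that the paper leaves as ``similar.'' One small correction to that justification: lower-boundedness of $V$ also requires the unbounded-above hypothesis (e.g.\ $V(\sigma)=-\sigma^2/2$ is even with $V'$ convex and $V'(0)=0$), because that hypothesis is what forces $V'$ to grow at least linearly on $[0,\infty)$.
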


\begin{proof}
The first derivatives follow from
\begin{align}
\nabla_1 V_t^{P,\Lambda}(u,v) &= \nabla v_t^{P,\Lambda}(u+v) + \nabla v_t^{P,\Lambda}(u-v),
\\
\nabla_2 V_t^{P,\Lambda}(u,v) &= \nabla v_t^{P,\Lambda}(u+v) - \nabla v_t^{P,\Lambda}(u-v),
\end{align}
and
\begin{equation}
\nabla v_t^{P,\Lambda}(u) = \id_t^Pu - \id_t^P \avg{\sigma}^{P,\Lambda}_{u,t}.
\end{equation}
The second derivatives are similar.
\end{proof}

\begin{lemma} \label{lem_DSS_equiv}
The GHS condition~\eqref{eq_prop_GHS} is equivalent to:
\begin{equation}
\forall g\in\R^{\Lambda},\forall h\in\R_+^\Lambda,\forall x\in\Lambda,\qquad 
\nabla_2V^{\Lambda,\Lambda}_0(g,h)
\geq 
\nabla_2V^{\Lambda,\Lambda}_0(0,h)
.
\end{equation}
The DSS inequality~\eqref{eq_prop_DSS} is equivalent to:
\begin{equation}
\forall g\in\R^{\Lambda},\forall h\in\R_+^\Lambda,\forall x\in\Lambda,\qquad 
\nabla_2V^{P,\Lambda}_1(g,h)
\geq 
\nabla_2V^{P,\Lambda}_1(0,h)
.
\label{eq_HR_DSS_with_Vt}
\end{equation}
(For $t=1$ the function $V^{P,\Lambda}$ does not depend on $P$).
\end{lemma}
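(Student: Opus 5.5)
The plan is to compute $\nabla_2 V$ explicitly at $t=0$ and $t=1$ from the definitions, and then check that the two inequalities in the statement are just the GHS condition \eqref{eq_prop_GHS} and the DSS inequality \eqref{eq_prop_DSS} written in coordinates.

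\emph{Case $t=0$, $P=\Lambda$.} Here the Gaussian factor degenerates to a Dirac mass at $u$, so the integral disappears:
\[
v_0^{\Lambda,\Lambda}(u)=\tfrac12(u,Au)+\sum_x U_x(u_x),
\]
up to an additive constant coming from the normalisation $t^{-|\Lambda|/2}$. That constant drops out after differentiation; to make this rigorous I would take the limit $t\downarrow0$ via Laplace's method, or simply adopt this as the interpretation of the definition. Then
\[
V_0^{\Lambda,\Lambda}(u,v)=(u,Au)+(v,Av)+\sum_x\big[U_x(u_x+v_x)+U_x(u_x-v_x)\big],
\]
so
\[
\partial_{v_x}V_0(g,h)=2(Ah)_x+U_x'(g_x+h_x)-U_x'(g_x-h_x).
\]
The quadratic part coupling $u$ and $v$ cancels, so subtracting the value at $g=0$ leaves
\[
U_x'(g_x+h_x)-U_x'(g_x-h_x)-U_x'(h_x)+U_x'(-h_x).
\]
Since $U_x'(s)=V_x'(s)-s$, the linear terms contribute $2h_x-2h_x=0$. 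The componentwise inequality is therefore exactly \eqref{eq_prop_GHS} at each site, with $(g_x,h_x)$ ranging over $\R\times\R_+$. Conversely, given a single pair $(g,h)$ for \eqref{eq_prop_GHS}, I would choose $g,h$ supported at one site $x$ to recover it. Hence the two conditions are equivalent.

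\emph{Case $t=1$.} Here $\id_1^P$ is the identity matrix, so the weight no longer distinguishes $P$ from $\Lambda\setminus P$. This gives the parenthetical claim that $V_1^{P,\Lambda}$ is independent of $P$. By Lemma~\ref{lemm_derivatives_V},
\[
\nabla_2V_1(g,h)=2h-\langle\sigma\rangle_{g+h,1}+\langle\sigma\rangle_{g-h,1}.
\]
The inequality \eqref{eq_HR_DSS_with_Vt} is thus equivalent to
\[
\langle\sigma_x\rangle_{g+h,1}-\langle\sigma_x\rangle_{g-h,1}\le\langle\sigma_x\rangle_{h,1}-\langle\sigma_x\rangle_{-h,1}.
\]
Now I identify the measure. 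Expanding $-\tfrac12|\sigma-u|^2-\tfrac12(\sigma,A\sigma)-\sum_x U_x$ gives
\[
-\tfrac12(\sigma,A\sigma)+(\sigma,u)-\sum_x V_x(\sigma_x)-\tfrac12|u|^2,
\]
because the $\tfrac12\sigma_x^2$ terms cancel against those in $U_x$. So $\langle\cdot\rangle_{u,1}=\mu_{A,V}^u$, and the last display is exactly \eqref{eq_prop_DSS}.

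I expect the only delicate point to be the meaning of the degenerate $t=0$ measure: normalising constants must not affect gradients, and the limit of $\nabla_2 V_t$ as $t\to0$ must exist. Both follow from the smoothness of $V$ and concentration of the Gaussian at $u_P$.
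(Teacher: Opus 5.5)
Your proposal is correct and follows the paper's proof essentially step by step. At $t=0$ you compute $V_0^{\Lambda,\Lambda}$ from the Dirac interpretation, so the difference of $v$-gradients reduces to the site-wise GHS expression; at $t=1$ you read the DSS inequality off Lemma~\ref{lemm_derivatives_V} after identifying $\langle\cdot\rangle_{u,1}$ with $\mu_{A,V}^u$. You also spell out two points the paper leaves implicit: the cancellation of the linear terms coming from $U_x'(s)=V_x'(s)-s$, and the converse direction via fields supported at a single site.
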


The first statement is not strictly necessary for the following as it will be generalised in Lemma~\ref{lem_DSS_initial_P},
but it illustrates the main idea.

\begin{proof}
  The claim for the DSS inequality is immediate from the identities of the previous lemma.
  For the GHS condition~\eqref{eq_prop_GHS}, note that the degenerate Gaussian measure in the definition of $v_t^{\Lambda,\Lambda}|_{t=0}$ is $\delta_u(d\sigma)$. Thus
  \begin{equation}
    v_0^{\Lambda,\Lambda}(u) = \frac12 (u,Au)+ \sum_{x\in\Lambda} U_x(u_x) + \text{constant}
  \end{equation}
  and
  \begin{equation}
    v_0^{\Lambda,\Lambda}(u\pm v) = \frac12 (u,Au) + \frac12 (v,Av) \pm (v,Au) +\sum_{x\in\Lambda} U_x(u_x \pm v_x) + \text{constant}
    .
  \end{equation}
  As a result,
  \begin{equation}
    V_0^{\Lambda,\Lambda}(u,v) = (u,Au) + (v,Av) + \sum_{x\in\Lambda} (U_x(u_x+v_x)+U_x(u_x-v_x)) + \text{constant}.
  \end{equation}
  Therefore
  \begin{equation}
    \nabla_{v_x} V_0^{\Lambda,\Lambda}(u,v) = 2(Av)_x + U_x'(u_x+v_x) -U_x'(u_x-v_x)
  \end{equation}
  and
  \begin{equation}
    \nabla_{v_x} V_0^{\Lambda,\Lambda}(u,v) - \nabla_{v_x} V_0^{\Lambda,\Lambda}(0,v)= U_x'(u_x+v_x) -U_x'(u_x-v_x)-\qb{U_x'(v_x)-U_x'(-v_x)}.
  \end{equation}
  Nonnegativity of the right-hand side is the GHS assumption~\eqref{eq_prop_GHS} on $V$.
  Finally, \eqref{eq_HR_DSS_with_Vt} follows directly from \eqref{eq_first_derivatives_DSS}.
\end{proof}


\begin{lemma} \label{lem_Polchinski_V}
For $t\in (0,1]$ and $(u,v)\in (\R^\Lambda)^2$,
$V_t(u,v)$ satisfies the following Polchinski-type equation:
\begin{equation}
2\partial_t V^{P,\Lambda}_t
=
\Delta^P_2V^{P,\Lambda}_t 
-\frac{1}{2}(\nabla^P_1 V^{P,\Lambda}_t)^2
-\frac{1}{2}(\nabla^P_2 V^{P,\Lambda}_t)^2
.
\label{eq_Polchinski_for_VP_DSS}
\end{equation}
\end{lemma}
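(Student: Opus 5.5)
The plan is to reduce \eqref{eq_Polchinski_for_VP_DSS} to a heat equation for the single-copy partition function, and then use the fact that $V_t^{P,\Lambda}$ is built from $v_t^{P,\Lambda}$ evaluated at $u\pm v$.

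\textbf{Step 1: heat equation for $Z_t$.} Write $Z_t(u)=e^{-v_t^{P,\Lambda}(u)}$. By \eqref{eq_def_half_potential_DSS},
\begin{equation*}
Z_t(u)=\int_{\R^\Lambda} t^{-|P|/2}e^{-|\sigma-u|_P^2/(2t)}\,G(\sigma,u_{\Lambda\setminus P})\,d\sigma,
\qquad
G(\sigma,w)=e^{-\frac12|\sigma-w|^2_{\Lambda\setminus P}-\frac12(\sigma,A\sigma)-\sum_x U_x(\sigma_x)} .
\end{equation*}
In the variables $u_P$ this is, up to a harmless constant factor $(2\pi)^{|P|/2}$, the convolution of $G$ with the Gaussian heat kernel $p_t$ of variance $t$ on $\R^P$. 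The variables $u_{\Lambda\setminus P}$ enter only as parameters. Since $\partial_t p_t=\frac12\Delta p_t$, differentiating under the integral gives
\begin{equation*}
\partial_t Z_t=\tfrac12\Delta^P Z_t
\end{equation*}
for $t\in(0,1]$.

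Differentiation under the integral sign is justified by dominated convergence. The function $G$ is integrable with Gaussian-type decay: $A$ is strictly positive definite, and by Lemma~\ref{lem:reg} we may assume $V''$ bounded and strictly positive at infinity. The $t$- and $u$-derivatives of the kernel only bring down polynomial factors in $\sigma$, locally uniformly in $(t,u)$ for $t>0$.

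\textbf{Step 2: the Hamilton--Jacobi form for $v_t$.} Taking $v=-\log Z$, the identities $\nabla^P Z=-Z\nabla^P v$ and $\Delta^P Z=Z\bigl(|\nabla^P v|^2-\Delta^P v\bigr)$ give
\begin{equation*}
\partial_t v_t^{P,\Lambda}=\tfrac12\Delta^P v_t^{P,\Lambda}-\tfrac12|\nabla^P v_t^{P,\Lambda}|^2 .
\end{equation*}

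\textbf{Step 3: passing to the doubled system.} Set $a=\nabla^P v_t(u+v)$ and $b=\nabla^P v_t(u-v)$. Then
\begin{equation*}
\nabla_1^P V_t=a+b,\qquad \nabla_2^P V_t=a-b,\qquad
\Delta_2^P V_t=(\Delta^P v_t)(u+v)+(\Delta^P v_t)(u-v).
\end{equation*}
The same formula holds for $\Delta_1^P V_t$, although we only need one of the two Laplacians. Summing the equation of Step~2 at $u+v$ and at $u-v$ and multiplying by $2$ gives
\begin{equation*}
2\partial_t V_t=\Delta_2^P V_t-\bigl(|a|^2+|b|^2\bigr).
\end{equation*}
The parallelogram identity $|a+b|^2+|a-b|^2=2(|a|^2+|b|^2)$ turns the last term into $\frac12(\nabla_1^PV_t)^2+\frac12(\nabla_2^PV_t)^2$. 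This is exactly \eqref{eq_Polchinski_for_VP_DSS}.

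\textbf{Main obstacle.} The computation itself is routine. The only point needing care is the justification in Step~1: interchanging derivatives and integral, and the fact that the normalisation $t^{-|P|/2}$ differs from the true heat kernel only by a $t$-independent constant, so it does not affect $\partial_t$ of $\log Z$ beyond that constant. Both are handled by the integrability remarks above.
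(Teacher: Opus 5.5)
Your proof is correct and follows the paper's route. The heat equation for $e^{-v_t^{P,\Lambda}}$ gives the Polchinski equation for $v_t^{P,\Lambda}$; summing it at $u\pm v$, using $\Delta^P_2 V^{P,\Lambda}_t=(\Delta^P v_t^{P,\Lambda})(u+v)+(\Delta^P v_t^{P,\Lambda})(u-v)$ and the parallelogram identity, is exactly what the paper's appeal to $\nabla_1^2 V_t^{P,\Lambda}=\nabla_2^2 V_t^{P,\Lambda}$ amounts to.

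One correction to your justification in Step~1: $G$ by itself need not be integrable, because $-U_x$ contributes $+\frac12\sigma_x^2$ for $x\in P$, and only the kernel compensates this. Dominate instead using $e^{-|\sigma-u|_P^2/(2t)}\le e^{-|\sigma-u|_P^2/2}$ for $t\le 1$. After that, strict positive definiteness of $A$ and the fact that $V$ is bounded below suffice, so no regularisation is needed.
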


\begin{proof}
The potential $v^{P,\Lambda}_t$ satisfies a simple instance of the Polchinski equation on $\R^\Lambda$:
\begin{equation}
\partial_t v^{P,\Lambda}_t
=
\frac{1}{2}\Delta^P v^{P,\Lambda}_t - \frac{1}{2}(\nabla^P v^{P,\Lambda}_t)^2
.
\end{equation}
For background on the Polchinski equation, see \cite{MR4798104}.
Here it is a simple computation that $z_t = e^{-v_t^{P,\Lambda}}$ satisfies the equivalent heat equation $\partial_t z_t = \frac12 \Delta^P z_t$.

Since $\nabla^2_1 V_t^{P,\Lambda} = \nabla^2_2 V_t^{P,\Lambda}$ as
immediate from the definition \eqref{eq_def_potential_DSS} of $V_t^{P,\Lambda}$ by symmetry,
\begin{align}
2 \partial_t V^{P,\Lambda}_t
&=
\frac{1}{2}\Delta^P_1V^{P,\Lambda}_t + \frac{1}{2}\Delta^P_2 V^{P,\Lambda}_t 
-\frac{1}{2}(\nabla^P_1 V^{P,\Lambda}_t)^2
-\frac{1}{2}(\nabla^P_2 V^{P,\Lambda}_t)^2
\nnb
&=
\Delta^P_2V^{P,\Lambda}_t 
-\frac{1}{2}(\nabla^P_1 V^{P,\Lambda}_t)^2
-\frac{1}{2}(\nabla^P_2 V^{P,\Lambda}_t)^2
.
\end{align}
The factor $2$ on the left-hand side is because differentiating in $u$ and $v$ each produce two terms.
\end{proof}

Let
\begin{equation}
W^{P,\Lambda}_t(u,v)
=
\nabla_2 V^{P,\Lambda}_t(u,v)-\nabla_2 V^{P,\Lambda}_t(0,v)
.
\end{equation}
We emphasise the crucial point that the Laplacian part of the Polchinski-type equation
\eqref{eq_Polchinski_for_VP_DSS} for $V_t^{P,\Lambda}(u,v)$ only involves derivatives in the $v$-variable.
As a consequence, as we show next, the vector-valued function $W^{P,\Lambda}_t$ satisfies a closed system of equations.

\begin{lemma} \label{lem_Polchinski_W}
Assume that $V$ is even. Then for $t \in (0,1]$ and $(u,v)\in (\R^\Lambda)^2$, for $x\in\Lambda$,
\begin{align}
2\partial_t W^{P,\Lambda}_t(u,v)_x
&=
\Delta^P_2 W^{P,\Lambda}_t(u,v)_x
-(\nabla_1 W^{P,\Lambda}_t(u,v)_x, \nabla_1 V^{P,\Lambda}_t(u,v))_P
\nnb
&\quad 
-(\nabla_2 W^{P,\Lambda}_t(u,v)_x, \nabla_2 V^{P,\Lambda}_t(u,v))_P
-(\nabla_2\nabla_{v_x} V^{P,\Lambda}_t(0,v), W^{P,\Lambda}_t(u,v))_P
,
\label{eq_PDE_W_DSS}
\end{align}
where $(u,v)_P = \sum_{x\in P} u_x v_x$.
%
\end{lemma}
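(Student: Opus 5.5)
The plan is to differentiate the Polchinski-type equation of Lemma~\ref{lem_Polchinski_V} in the variable $v_x$ and then subtract the same identity evaluated at $u=0$. Write $V=V^{P,\Lambda}_t$ and $\partial_{v_x}=\nabla_{v_x}$.

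\textbf{Step 1: differentiate in $v_x$.} Applying $\partial_{v_x}$ to \eqref{eq_Polchinski_for_VP_DSS} and commuting it with the constant-coefficient operators $\partial_t$ and $\Delta^P_2$ gives
\begin{equation*}
2\partial_t \partial_{v_x}V = \Delta^P_2\partial_{v_x}V - (\nabla^P_1\partial_{v_x}V,\nabla^P_1V) - (\nabla^P_2\partial_{v_x}V,\nabla^P_2V),
\end{equation*}
valid at every $(u,v)$. The required smoothness follows from the integral representation \eqref{eq_def_half_potential_DSS}. The Gaussian factor allows differentiation under the integral, and the regularisation of Lemma~\ref{lem:reg} may be assumed.

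\textbf{Step 2: evaluate at $u=0$.} Since $V$ is even, $v_t$ is even, so $V(u,v)$ is even in $u$ for fixed $v$. Hence $\nabla_1 V(0,v)=0$ and $\nabla_1\partial_{v_x}V(0,v)=0$. The first product term therefore vanishes at $u=0$, leaving
\begin{equation*}
2\partial_t \partial_{v_x}V(0,v) = \Delta^P_2\partial_{v_x}V(0,v) - (\nabla^P_2\partial_{v_x}V(0,v),\nabla^P_2V(0,v)).
\end{equation*}

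\textbf{Step 3: subtract.} Subtract the identity of Step 2 from that of Step 1. The time derivative and Laplacian terms combine directly into $2\partial_t W_x$ and $\Delta^P_2 W_x$. For the first product term, $\nabla_1\partial_{v_x}V(u,v)=\nabla_1 W_x(u,v)$, since the subtracted term $\partial_{v_x}V(0,v)$ does not depend on $u$. This term thus becomes $-(\nabla_1W_x,\nabla_1V)_P$ with nothing to subtract.

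For the $v$-gradient terms, use the elementary identity $ab-a'b'=(a-a')b+a'(b-b')$ with
\[
a=\nabla_2\partial_{v_x}V(u,v),\quad a'=\nabla_2\partial_{v_x}V(0,v),\quad b=\nabla_2V(u,v),\quad b'=\nabla_2V(0,v).
\]
Here $a-a'=\nabla_2W_x(u,v)$ and $b-b'=W(u,v)$. This yields
\[
-(\nabla_2W_x,\nabla_2V(u,v))_P-(\nabla_2\partial_{v_x}V(0,v),W(u,v))_P,
\]
which is exactly \eqref{eq_PDE_W_DSS}.

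\textbf{Main obstacle.} There is no real obstacle; the point is a bookkeeping choice. The product difference must be split so that one factor becomes a difference of $W$ and the other is a coefficient evaluated at $(u,v)$ or $(0,v)$, as appropriate. Evenness is what kills the $\nabla_1$ term at $u=0$ and so makes the system closed. One should also check that the inner products are restricted to $P$, since the equation only involves $\nabla^P$ and $\Delta^P$.
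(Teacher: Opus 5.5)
Your proposal is correct and matches the paper's proof essentially step for step. You differentiate the Polchinski-type equation in $v$, use evenness to get $\nabla_1 V^{P,\Lambda}_t(0,v)=0$ so the $u$-gradient term has nothing to subtract, and split the $v$-gradient product difference with the same identity as the paper ($a\cdot b-a'\cdot b'=(a-a')\cdot b'+a\cdot(b-b')$), only with the roles of the two factors swapped.
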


\begin{proof}
Differentiating \eqref{eq_Polchinski_for_VP_DSS} gives
\begin{equation}
  2\partial_t \nabla_2 V_t^{P,\Lambda}
  = \Delta_2^P \nabla_2 V_t^{P,\Lambda}
  - (\nabla_1^P V_t^{P,\Lambda}, \nabla_1^P \nabla_2 V_t^{P,\Lambda})_P
  - (\nabla_2^P V_t^{P,\Lambda}, \nabla_2^P \nabla_2 V_t^{P,\Lambda})_P.
\end{equation}
For the first quadratic term, recall from \eqref{eq_def_potential_DSS} that 
\begin{equation}
\nabla_1 V_t^{P,\Lambda}(0,v) = \nabla v_t^{P,\Lambda}(v)+\nabla v_t^{P,\Lambda}(-v) =0
\end{equation}
since $v^{P,\Lambda}_t$ is even,
and hence
\begin{align}
   (\nabla_1^P V_t^{P,\Lambda}, \nabla_1^P \nabla_2 V_t^{P,\Lambda})_P(u,v)
& =(\nabla_1^P V_t^{P,\Lambda}, \nabla_1^P \nabla_2 V_t^{P,\Lambda})_P(u,v)
  -(\nabla_1^P V_t^{P,\Lambda}, \nabla_1^P \nabla_2 V_t^{P,\Lambda})_P(0,v)
  \nnb
  &=
  (\nabla_1^P V_t^{P,\Lambda}, \nabla_1^P W_t^{P,\Lambda})_P(u,v)
  .
\end{align}
For the second quadratic term, taking the difference using $a\cdot b - a'\cdot b' = (a-a')\cdot b' + a\cdot (b-b')$,
\begin{align}
   &(\nabla_2^P V_t^{P,\Lambda}, \nabla_2^P \nabla_2 V_t^{P,\Lambda})_P(u,v)
   -(\nabla_2^P V_t^{P,\Lambda}, \nabla_2^P \nabla_2 V_t^{P,\Lambda})_P(0,v)
   \nnb
   &= 
   (W_t^{P,\Lambda}(u,v), \nabla_2^P \nabla_2 V_t^{P,\Lambda}(0,v))_P
   +(\nabla_2^P V_t^{P,\Lambda}(u,v), \nabla_2^P W_t^{P,\Lambda}(u,v))_P
   .
\end{align}
Combining gives the desired equation.
\end{proof}

The next lemma shows that the parabolic system of equations \eqref{eq_PDE_W_DSS} for $W_t^{P,\Lambda}$ is cooperative.

\begin{lemma} \label{lem_V_FKG}
$\nabla_{v_x}\nabla_{v_y} V_t^{P,\Lambda}(u,v)\leq 0$ for all $x \neq y$.
\end{lemma}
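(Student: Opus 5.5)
By Lemma~\ref{lemm_derivatives_V}, for $x\neq y$ the Hessian formula \eqref{eq_second_derivatives_DSS} makes the diagonal term $2\id^P_t$ drop out, leaving
\begin{equation}
\nabla_{v_x}\nabla_{v_y} V_t^{P,\Lambda}(u,v)
= -(\id^P_t)_{xx}(\id^P_t)_{yy}\Big(\langle \sigma_x;\sigma_y\rangle_{u+v,t}^{P,\Lambda}+\langle \sigma_x;\sigma_y\rangle_{u-v,t}^{P,\Lambda}\Big).
\end{equation}
The prefactors $(\id^P_t)_{xx},(\id^P_t)_{yy}$ are strictly positive. It therefore suffices to show that the fluctuation measure $\langle\cdot\rangle_{w,t}^{P,\Lambda}$ has nonnegative covariances $\langle\sigma_x;\sigma_y\rangle_{w,t}^{P,\Lambda}\geq 0$ for every $w\in\R^\Lambda$ and $t\in(0,1]$. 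We then apply this with $w=u\pm v$.

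To obtain these nonnegative covariances, we use the FKG inequality for measures on $\R^\Lambda$ whose density is $e^{-H}$ with $H$ a sum of single-site terms plus pairwise terms $\frac12\sum_{x\neq y}A_{xy}\sigma_x\sigma_y$ with $A_{xy}\leq 0$. We expand the exponent of \eqref{eq_def_product_measure_DSS}. The terms $-\frac{|\sigma-w|^2_P}{2t}$, $-\frac{|\sigma-w|^2_{\Lambda\setminus P}}{2}$ and $-\sum_x U_x(\sigma_x)$ are all sums of functions of a single coordinate, and so is the diagonal part of $-\frac12(\sigma,A\sigma)$. The only interaction is $-\sum_{x\neq y}\frac12 A_{xy}\sigma_x\sigma_y$. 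Since $A$ is ferromagnetic, its mixed second derivatives are $-A_{xy}\geq 0$, so the density is supermodular in the sense that $\log$ of the density has nonnegative off-diagonal Hessian. This is equivalent to the lattice condition $\rho(\sigma\vee\sigma')\rho(\sigma\wedge\sigma')\geq \rho(\sigma)\rho(\sigma')$. The FKG (Holley/Preston) inequality then gives that increasing functions are positively correlated, and the coordinates $\sigma_x,\sigma_y$ are increasing. Hence the covariance is nonnegative.

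We need the measure to be normalisable with finite second moments for every $t\in(0,1]$. Since $A$ is strictly positive definite and the quadratic terms add positive definite pieces, it remains to control $U_x=V_x-\frac12\sigma_x^2$. This is the one point to check. The Gaussian terms contribute $+\frac{1}{2t}\sigma_x^2\geq\frac12\sigma_x^2$ on $P$ and exactly $\frac12\sigma_x^2$ off $P$. These cancel the $-\frac12\sigma_x^2$ in $U_x$, so the effective single-site potential is $V_x$ plus a nonnegative quadratic. $V_x$ is bounded below and grows at least linearly, being even and convex-derivative with $V''\geq0$ by Remark~\ref{rk:GHS}, and unbounded above. Together with $A>0$ this yields integrability. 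The FKG inequality for continuous spins, for instance via discretisation or the standard proof by the Holley criterion for log-supermodular densities on $\R^\Lambda$, then applies and concludes the lemma. The main obstacle is only this justification of FKG in the continuous unbounded setting, which is standard.
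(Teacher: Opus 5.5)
Your proof is correct and follows the paper's argument. The paper's proof is the one-line observation that, by \eqref{eq_second_derivatives_DSS}, the off-diagonal entries of $\nabla_2^2 V_t^{P,\Lambda}$ are negative multiples of sums of truncated two-point functions of ferromagnetic measures, which are nonnegative by FKG; you additionally spell out the log-supermodularity and integrability checks.

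One small slip in the integrability check: GHS potentials need not satisfy $V''\geq 0$ (e.g.\ a double-well $\varphi^4$ potential). Letting $g\to 0$ actually gives $V'''\geq 0$ on $[0,\infty)$. Your conclusion still holds, because $V'$ is odd and convex on $[0,\infty)$ while $V$ is unbounded above, so $V'(\sigma)\geq c\sigma$ for large $\sigma$ and $V$ is bounded below.
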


\begin{proof}
Follows from the FKG inequality, see \eqref{eq_second_derivatives_DSS}.
\end{proof}

\begin{lemma} \label{lem_V_growth}
Assume $V''$ is bounded and strictly positive at $\infty$. Then $\nabla^2 v_t^{P,\Lambda}(u)$ is bounded uniformly in $t\in [0,1]$ and $u\in\R^\Lambda$.
\end{lemma}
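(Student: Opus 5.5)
Throughout, $u=(u_P,u_{\Lambda\setminus P})$ and we write $\Sigma_{u,t}$ for the covariance matrix $\big(\langle \sigma_x;\sigma_y\rangle_{u,t}^{P,\Lambda}\big)_{x,y}$. The starting point is the Hessian formula from Lemma~\ref{lemm_derivatives_V}, written for the single-copy potential:
\begin{equation*}
\nabla^2 v_t^{P,\Lambda}(u) = \id^P_t - \id^P_t \,\Sigma_{u,t}\,\id^P_t .
\end{equation*}
The upper bound is immediate: the covariance is positive semidefinite, so $\nabla^2 v_t^{P,\Lambda}(u) \le \id^P_t$. This only bounds the entries by $t^{-1}$, which blows up as $t\to 0$, so it is not enough by itself. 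The plan is therefore to rewrite the measure so that the $t$-dependence appears only through a Gaussian perturbation, and then to use a Brascamp--Lieb-type bound for the upper bound and a matching argument for the lower bound.

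\textbf{Step 1: rewrite the measure.} Write $Q=A+\id$. By Lemma~\ref{lem:reg}, $U_x''=V_x''-1$ is bounded. The measure $\langle\cdot\rangle^{P,\Lambda}_{u,t}$ has density proportional to
\begin{equation*}
\exp\Big[-\tfrac12(\sigma,(A+\id^P_t)\sigma) + (\sigma,\id^P_t u) - \textstyle\sum_x U_x(\sigma_x)\Big].
\end{equation*}
Its Hessian in $\sigma$ is $H=A+\id^P_t+\mathrm{diag}(U''_x)=\id^P_t - \id + A + \mathrm{diag}(V_x'')$. Both $A$ and $\mathrm{diag}(V'')$ may fail to be convex in places, but $V''$ is bounded and strictly positive at infinity, so only a compact region is nonconvex. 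I would split $V=V^{c}+V^{b}$, where $V^c$ is uniformly convex with bounded second derivative and $V^{b}$ is smooth with compact support. Strict positive definiteness of $A$ then gives $H_c := A+\id^P_t-\id+\mathrm{diag}(V_c'')\ge \id^P_t - \id + c$ for some $c>0$. Note $\id^P_t-\id\ge 0$, and on $P$ it equals $t^{-1}-1$.

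\textbf{Step 2: upper bound on the covariance.} By Brascamp--Lieb for the convex part, combined with Holley--Stroock for the bounded perturbation $V^b$ (which changes the density by a factor in $[e^{-2\|V^b\|_\infty},e^{2\|V^b\|_\infty}]$), we get
\begin{equation*}
\Sigma_{u,t} \le C\, H_c^{-1} \le C (\id^P_t-\id+c)^{-1}.
\end{equation*}
Holley--Stroock applies directly to variances of linear functionals, which is exactly what is needed here. It follows that $\id^P_t\Sigma\,\id^P_t$ has diagonal entries on $P$ bounded by $C t^{-2}/(t^{-1}-1+c)=O(t^{-1})$. This is still not bounded: the naive bound is off by a factor $t^{-1}$.

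\textbf{Step 3: obtain the needed cancellation (the main obstacle).} We need $\id^P_t - \id^P_t\Sigma\id^P_t \ge -C$, i.e.\ $\Sigma \le (\id^P_t)^{-1} + C(\id^P_t)^{-2}$, which on $P$ reads $t + Ct^2$. A crude Brascamp--Lieb bound only gives $t/(1+(c-1)t)$, which is acceptable as long as $c\ge 1$, but not when $c<1$. I would handle this by expanding exactly. Write $M=A+\mathrm{diag}(U'')$ with $\|M\|\le K$. The Brascamp--Lieb inverse becomes $(\id^P_t+M)^{-1}$, and on $P$ one has $(\id^P_t+M)^{-1}=(\id^P_t)^{-1}-(\id^P_t)^{-1}M(\id^P_t)^{-1}+\dots$, whose correction is $O(t^2)$ provided $M$ is bounded below. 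The nonconvex compact part is a problem only if its contribution is not $O(t^2)$. For that I would use a Helffer--Sjöstrand/Witten-Laplacian representation, $\Sigma=\langle (\nabla^2 H_{\rm tot}+L)^{-1}\rangle$, and observe that for $t$ small the $t^{-1}$ confinement on $P$ dominates the bounded nonconvexity, so the operator is uniformly convex on the $P$-coordinates. For $t$ away from $0$ everything is bounded by compactness in $t$ together with the decay of $V''$-effects. Combining these bounds the Hessian above by $\id^P_t - (\id^P_t)(t+Ct^2)\id^P_t$ and below similarly.

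\textbf{Step 4: coordinates outside $P$ and the limit $t=0$.} The entries in $\Lambda\setminus P$ are bounded by Step 2, since there $\id^P_t=1$. The case $t=0$ follows from the explicit formula for $v_0$ analogous to the one in Lemma~\ref{lem_DSS_equiv}, using boundedness of $V''$.

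The hardest step is the uniform lower bound $\nabla^2 v_t\ge -C$ near $t=0$ on $P$ (Step 3). There I expect to need the Witten-Laplacian argument: the covariance is $t\,\id - t^2\langle\nabla^2 H\rangle + O(t^3)$, and $\nabla^2 H$ is bounded because $V''$ is bounded.
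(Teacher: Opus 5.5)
Steps 1--2 are sound, but Step 3, which carries the whole content of the lemma, is not a proof. In the representation $\nabla^2 v_t^{P,\Lambda}=\id^P_t-\id^P_t\Sigma_{u,t}\id^P_t$ you need, uniformly in $u$, that $\Sigma_{u,t}=t+O(t^2)$ on the $P$-diagonal in \emph{both} directions, and that $\Sigma_{u,t}=O(t)$ on the mixed block $P\times(\Lambda\setminus P)$.

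None of your tools gives this.
\begin{itemize}
\item Holley--Stroock multiplies the Brascamp--Lieb bound by $e^{\mathrm{osc}}>1$. This turns the leading term $t$ into $e^{\mathrm{osc}}t$ and destroys the cancellation. That, not $c<1$, is the real obstruction: a genuine Brascamp--Lieb bound with any $c>0$ gives $t/(1-(1-c)t)=t+O(t^2)$.
\item Brascamp--Lieb with $(\id^P_t+M)^{-1}$, or the Helffer--Sj\"ostrand bound, requires $\id^P_t+M>0$. This fails in the $\Lambda\setminus P$ directions, where there is no $t^{-1}$ confinement and $V''$ can be negative (double-well GHS potentials). So ``uniformly convex in the $P$-coordinates'' does not make the Witten-Laplacian argument work.
\item The expansion $\Sigma=t-t^2\langle\nabla^2H\rangle+O(t^3)$ is asserted, and its uniformity in $u$ is exactly what has to be shown.
\item The Hessian \emph{upper} bound requires the covariance \emph{lower} bound $\Sigma_{xx}\ge t-Ct^2$ for $x\in P$. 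You give no argument for it; the ``immediate'' bound $\nabla^2 v_t\le\id^P_t$ is unbounded, as you note yourself.
\item For $x\in P$, $y\notin P$, Step 2 only gives $|\Sigma_{xy}|\le C\sqrt t$. So the entry $-t^{-1}\Sigma_{xy}$ is not controlled, contrary to Step 4.
\end{itemize}

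The missing idea is to avoid producing the $t^{-1}$ prefactors in the first place. Substitute $\sigma\mapsto\sigma+u$ in the integral defining $v^{P,\Lambda}_t$. The $t$-dependent Gaussian becomes $|\sigma|^2_P/(2t)+|\sigma|^2_{\Lambda\setminus P}/2$, which is independent of $u$. The field $u$ then enters only through $\frac12(\sigma+u,A(\sigma+u))+\sum_xU_x(\sigma_x+u_x)$, whose second derivatives are bounded. Differentiating gives
\[
\nabla_{u_x}\nabla_{u_y}v^{P,\Lambda}_t(u)=A_{xy}+\delta_{xy}\E[U_x''(\sigma_x+u_x)]-\mathrm{Cov}\big((A\sigma)_x+U_x'(\sigma_x+u_x),(A\sigma)_y+U_y'(\sigma_y+u_y)\big).
\]
The upper bound is now immediate, and the lower bound only needs these variances to be $O(1)$. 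That follows from a Poincar\'e inequality uniform in $t\in[0,1]$ and $u$. Use Bakry--\'Emery for the uniformly convex part, noting that the term $|\sigma|^2_P/(2t)$ only adds convexity. Then use Holley--Stroock for the bounded perturbation; its multiplicative loss is harmless here because no cancellation is needed.

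If you prefer to stay in your framework, there is a longer repair. Condition on $\sigma_{\Lambda\setminus P}$ and apply Brascamp--Lieb to the conditional law of $\sigma_P$, whose Hessian is $\ge t^{-1}-K$; combine this with the law of total covariance to get the upper bounds on $\Sigma$, including the mixed block. Then use the Cram\'er--Rao inequality $\Sigma\ge(\E\nabla^2H)^{-1}$ for the lower bound.
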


\begin{proof}
%
To get the uniformity in $t\in [0,1]$, expressions of the form \eqref{eq_second_derivatives_DSS} are not convenient. Instead, we first  change variables in \eqref{eq_def_half_potential_DSS}:
\begin{equation}
v_t^{P,\Lambda}(u)
=
-\log \int_{\R^{\Lambda}} t^{-|P|/2}
\exp\bigg[ -\frac{|\sigma|^2_P}{2t} - \frac{|\sigma|^2_{\Lambda\setminus P}}{2}
-\frac{1}{2} \big( \sigma + u ,A (\sigma+u) \big)-\sum_{x\in\Lambda} U_x(\sigma_x + u_x)\bigg] \, d\sigma,
\end{equation}
and then take the derivatives:
\begin{align}
\nabla_{u_x}\nabla_{u_y} v^{P,\Lambda}_t(u)
&= A_{xy} + \delta_{xy}\E\big[U_x''(\sigma_x+u_x)\big]
\nnb
&\qquad - \cov\big((A\sigma)_x+U_x'(\sigma_x+u_x), (A\sigma)_y+ U_y'(\sigma_y+u_y)\big),
\end{align}
where expectation and covariance are under the measure $\avg{\cdot}_{u,t}^{P,\Lambda}$.
Since $\frac12(\sigma,A\sigma)+\sum V_x(\sigma)$ can be written as 
a bounded perturbation of a uniformly convex potential (with the bound depending on $\Lambda$),
the measure satisfies a log-Sobolev inequality (and in particular a spectral gap inequality)
uniformly in  $t\in [0,1]$ and in $u\in \R^\Lambda$ (with the constant depending on $\Lambda$).
Thus
\begin{equation}
\var(U_x'(\sigma_x+u_x)) \lesssim \E[U_x''(\sigma_x+u_x)^2]
\lesssim 1,
\qquad
\var((A\sigma)_x) \lesssim 1,
\end{equation}
where $\lesssim$ denotes inequality up to a constant independent of $t\in[0,1]$ and the field $u$.
The expectations are also bounded since each 
$U''_x = V''_x -1$ is bounded (see Section \ref{sec: Regularisation}).
\end{proof}

\subsection{Maximum principle}
Let
\begin{equation}
X_P = \R^\Lambda\times (\R^P_+\times\{0_{\Lambda\setminus P}\})
\end{equation}
and define the (degenerate) parabolic boundary
\begin{equation}
\partial(\R_+\times X_P)
= (\R_+\times \partial X_P) \cup (\{t=0\} \times X_P)
\end{equation}
where
\begin{equation}
  \partial X_P = \R^\Lambda \times (\partial \R_+^P \times \{0_{\Lambda\setminus P}\}).
\end{equation}

\begin{proposition}\label{prop_MP_DSS}
Suppose $W^{P,\Lambda}\geq 0$ entrywise on the (degenerate) parabolic boundary $\partial(\R_+\times X_P)$.
Then $W^{P,\Lambda} \geq 0$ entrywise on $\R_+\times X_P$.
\end{proposition}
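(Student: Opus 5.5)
We will run a parabolic maximum principle for the weakly coupled cooperative system \eqref{eq_PDE_W_DSS} on the unbounded domain $X_P$. Write the system for the components $W_x := W^{P,\Lambda}_t(u,v)_x$ as
\begin{equation}
2\partial_t W_x = \Delta^P_2 W_x - (b(t,u,v), \nabla W_x) - \sum_{y\in P} c_{xy}(t,v) W_y ,
\end{equation}
where the drift $b = (\nabla_1^P V^{P,\Lambda}_t, \nabla_2^P V^{P,\Lambda}_t)$ and $c_{xy}(t,v) = \nabla_{v_x}\nabla_{v_y}V^{P,\Lambda}_t(0,v)$. By Lemma~\ref{lem_V_FKG}, $c_{xy}\le 0$ for $x\ne y$, so the off-diagonal terms $-c_{xy}W_y$ push in the right direction when the other components are nonnegative. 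By Lemma~\ref{lem_V_growth} (after regularising as in Lemma~\ref{lem:reg}), the $c_{xy}$ are bounded uniformly, say $|c_{xy}|\le K$, and the drift is at most linearly growing in $(u,v)$, since $\nabla v_t^{P,\Lambda}$ is Lipschitz. Also $W$ itself grows at most linearly, because $\nabla_2 V$ is Lipschitz. The components $W_x$ with $x\notin P$ are not evolved by the equation in the $v_P$ directions. For those we still use the same equation, in which only $W_y$, $y\in P$, appear on the right, so the system closes on $P$ and the $x\notin P$ components follow once the $P$ components are known to be nonnegative.

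The key steps are as follows.

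\textbf{Step 1 (exponential reweighting).} Set $\tilde W_x = e^{-\lambda t} W_x$ with $\lambda$ large compared with $K|P|$. This makes the diagonal zero-order coefficient $c_{xx} + 2\lambda$ positive.

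\textbf{Step 2 (growth barrier).} Since the domain is unbounded, add a barrier
\begin{equation}
\phi_\epsilon(t,u,v) = \epsilon e^{Mt}\big(1 + |u|^2 + |v|^2\big).
\end{equation}
Because $b$ grows at most linearly, $(b,\nabla\phi)\lesssim (1+|u|^2+|v|^2)$, and the Laplacian term is bounded. Choosing $M$ large therefore makes $\phi_\epsilon$ a strict supersolution of every component equation, including the off-diagonal terms, which contribute $K|P|\phi_\epsilon$. Now consider $Z_x = \tilde W_x + \phi_\epsilon$. Since $W$ grows at most linearly, $Z_x > 0$ outside a compact set for $t\in[0,T]$. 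By hypothesis $Z_x > 0$ on the parabolic boundary.

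\textbf{Step 3 (first negative time).} Suppose some $Z_x$ fails to be positive. Take the first time $t_0 > 0$ at which some component $Z_x$ vanishes at a point $(u_0,v_0)$. This point exists by compactness, and it lies in the interior of $X_P$ since $Z>0$ on $\partial X_P$. At that point $\partial_t Z_x\le 0$, $\nabla Z_x = 0$, $\Delta^P_2 Z_x \ge 0$, and $Z_y\ge 0$ for all $y$. Only $v_P$ derivatives appear in the Laplacian, and $(u_0,v_0)$ is an interior minimum in all variables. Plugging in and using $c_{xy}\le0$ for $y\ne x$ together with the strict supersolution property of $\phi_\epsilon$ gives a contradiction. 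Letting $\epsilon\to0$ yields $W_x\ge 0$ on $P$, and then the components with $x\notin P$ are handled as above.

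The main obstacle is justifying regularity and growth up to $t=0$. The equation is posed for $t\in(0,1]$, while the boundary condition is at the degenerate time $t=0$. I would argue continuity of $W_t$ as $t\downarrow 0$, locally uniformly, via the change of variables in the proof of Lemma~\ref{lem_V_growth}, which gives uniform bounds in $t\in[0,1]$. I would also need to verify that the linear growth bounds on $b$ and $W$ hold uniformly in $t$, which again follows from the uniform Hessian bound of Lemma~\ref{lem_V_growth}. For the same reason, care is needed at the corners of $\partial\R^P_+$. Since only positivity on the boundary is used and the minimum is taken over the closed domain, I expect this to be harmless.
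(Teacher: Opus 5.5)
Your proposal is correct and follows the paper's route: you recast \eqref{eq_PDE_W_DSS} as a linear cooperative system, with the off-diagonal signs coming from Lemma~\ref{lem_V_FKG}, and you take the linear growth of the drift and of $W$ and the boundedness of the zero-order coefficients from Lemma~\ref{lem_V_growth}, then apply a parabolic maximum principle on the unbounded domain. The only difference is that the paper cites a maximum principle for cooperative systems from the literature, whereas you prove it directly with the barrier $\epsilon e^{Mt}(1+|u|^2+|v|^2)$ and a first-touching-time argument, and you also make explicit the continuity of $W_t$ as $t\downarrow 0$, which the paper leaves implicit (the $e^{-\lambda t}$ reweighting and the separate treatment of $x\notin P$ are harmless but unnecessary).
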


\begin{proof}
The vanishing components in $\Lambda\setminus P$ do not play a role and the statement is really about $\R^\Lambda$-valued functions on $\R^\Lambda \times \R_+^{P}$.
The equation \eqref{eq_PDE_W_DSS} for $W^{P,\Lambda}_t$ is a system of linear equations
\begin{equation}
  \partial_t U_i = (\nabla^T A \nabla)U_i + \sum_j B_{j}\nabla_j U_i + \sum_j C_{ij} U_j = L[U_i] + \sum_j C_{ij} U_j,
\end{equation}
where $U$ is an $\R^N$-valued function defined on an unbounded domain $\Omega = \R^N \times \R_+^M$,
$A$ is a constant positive semidefinite matrix, $B$ a time-dependent vector-valued function,
and $C$ a time-dependent matrix-valued function with $C_{ij} \geq 0$ for $i \neq j$,
by Lemma~\ref{lem_V_FKG}.

The maximum principle for cooperative parabolic systems applies under growth assumptions on the solution $U$ and the coefficients $B,C$.
It follows from \cite[Theorem~1]{MR374632} that the maximum principle holds provided that $B$ is linearly bounded, $C$ is quadratically bounded,
and $U$ has at most Gaussian exponential growth.
Here, Lemma~\ref{lem_V_growth} guarantees that $B$ has linearly growth, $C$ is actually bounded, and $U$ has linear growth, so all assumptions
are easily satisfied.
An alternative (and more general) reference would be \cite[Theorem~2]{MR612130} and the discussion following it.
\end{proof}

\subsection{Induction and proof of Theorem~\ref{thm:main}}\label{sec_induction}

The maximum principle of Proposition~\ref{prop_MP_DSS} with $P=\Lambda$ would imply the DSS inequality, assuming it holds on the boundary.
The boundary assumption is removed by an induction over the size of the boundary, corresponding to a double  induction on the size of the sets $P\subset\Lambda$. 
Let DSS($P,\Lambda$) be the assumption that, for all ferromagnetic matrix $A\in\R^{\Lambda\times\Lambda}$ and all GHS potentials $V$ as in Theorem~\ref{thm:main}:
\begin{equation}
W^{P,\Lambda}\geq 0\quad\text{entrywise on } \R_+\times X_P
.
\tag{$\text{DSS}(P,\Lambda)$}
\end{equation}
To prove the full DSS inequality \DSS$(\Lambda,\Lambda)$, we apply Proposition~\ref{prop_MP_DSS} inductively with 
assumption \DSS($P,\Lambda$). 
This idea is similar to the original approach of Ding--Song--Sun \cite{MR4586225}, who also induct over the size of the set on which $h$ vanishes.
The next two lemmas carry out the induction step.

\begin{lemma}[Space boundary] \label{lem_DSS_boundary}
Let $P \subseteq \Lambda$, and assume \DSS$(P',\Lambda)$ for all $P' \subsetneq P$. Then, for all $t \in (0,1]$,
\begin{equation}
W^{P,\Lambda}_t(g,h)\geq 0\quad\text{entrywise for }(g,h)\in \partial  X_P.
\end{equation}
\end{lemma}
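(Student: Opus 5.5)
The plan is to show that on the face $\{h_y=0\}$ of $\partial X_P$, with $y\in P$, the function $W^{P,\Lambda}_t$ is, up to a positive diagonal factor, equal to $W^{P',\Lambda}_t$ for $P'=P\setminus\{y\}$. The coupling matrix changes, but it remains ferromagnetic and strictly positive definite. Since \DSS$(P',\Lambda)$ is assumed for \emph{all} such matrices and all GHS potentials, this gives the claim. The mechanism is to reinterpret the variance-$t$ Gaussian at site $y$ as a variance-$1$ Gaussian, an extra diagonal coupling, and an extra external field. This is legitimate because at $h_y=0$ both copies $u\pm v$ have the same $y$-coordinate $g_y$.

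\textbf{Step 1 (rewriting the kernel at site $y$).} Fix $t\in(0,1]$ and set $c=1/t-1\ge 0$. In \eqref{eq_def_half_potential_DSS}, for any $w\in\R^\Lambda$, write
\begin{equation}
-\frac{(\sigma_y-w_y)^2}{2t}
= -\frac{(\sigma_y-w_y)^2}{2} - \frac{c}{2}\sigma_y^2 + c\,w_y\sigma_y - \frac{c}{2}w_y^2 .
\end{equation}
Next, absorb $-\frac c2\sigma_y^2$ into $A' = A + c\,e_ye_y^T$. This matrix is still ferromagnetic, since off-diagonal entries are unchanged, and still strictly positive definite. Then combine the remaining terms:
\begin{equation}
-\frac{(\sigma_y-w_y)^2}{2} + c\,w_y\sigma_y
= -\frac{(\sigma_y-(1+c)w_y)^2}{2} + q(w_y),
\end{equation}
where $q$ is an explicit quadratic polynomial. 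The result is the identity
\begin{equation}
v_t^{P,\Lambda}[A](w) = v_t^{P',\Lambda}[A'](Tw) - \tilde q(w_y) + \text{const},
\end{equation}
where $T$ is diagonal with $T_{yy}=1/t$ and $T_{zz}=1$ otherwise, and $\tilde q$ is quadratic. The factor $t^{-1/2}$ only contributes a constant. This identity holds for all $w$.

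\textbf{Step 2 (passing to $V$ and $W$).} Apply the identity at $w=u\pm v$ and add. The polynomial contribution is $\tilde q(u_y+v_y)+\tilde q(u_y-v_y)$. Because $\tilde q$ is quadratic, its $v$-gradient is linear in $v_y$ alone and independent of $u$, so it cancels in the difference defining $W$. By the chain rule,
\begin{equation}
W^{P,\Lambda}_t[A](g,h) = T\, W^{P',\Lambda}_t[A'](Tg,Th).
\end{equation}
Now $Th=h$ on the face $h_y=0$, and $h\in\R_+^{P'}\times\{0_{\Lambda\setminus P'}\}$, so $(Tg,h)\in X_{P'}$. Hence \DSS$(P',\Lambda)$, applied with the matrix $A'$ and the same $V$, gives $W^{P',\Lambda}_t[A'](Tg,h)\ge 0$ entrywise. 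Since $T$ has positive diagonal entries, the claim follows on every face of $\partial X_P$, and these faces cover $\partial X_P$.

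\textbf{Main obstacle.} The delicate point is the bookkeeping in Step 2. One must check that the $u$-independent quadratic term really drops out of $W$ in \emph{every} component, including the $y$-component, where the $v_y$-derivative is taken before setting $h_y=0$. One must also check that the rescaling $T$ acts on the $v$-variable consistently: it is harmless because it fixes $v_y=0$ and multiplies $\nabla_{v_y}$ by the positive factor $1/t$. Finally, the induction hypothesis must be quantified over all ferromagnetic $A$, so that it can be applied to $A'$; the definition of \DSS$(P',\Lambda)$ provides exactly this.
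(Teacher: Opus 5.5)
Your proposal is correct and is essentially the paper's argument. The paper likewise rewrites the variance-$t$ Gaussian weight on $P\setminus P'$ as a unit-variance one plus the extra diagonal coupling $\tilde A = A+(1/t-1)1_{P\setminus P'}$ and the rescaled field $\tilde g = g/t$ there, then applies $\mathrm{DSS}(P',\Lambda)$ to this modified model, which is still ferromagnetic and positive definite. You remove one site at a time and work with $v_t$ rather than the magnetisations, which is only cosmetic. Note that your identity $W^{P,\Lambda}_t[A](g,h)=T\,W^{P',\Lambda}_t[A'](Tg,Th)$ holds for all $(g,h)$; $h_y=0$ is needed only to place $(Tg,h)$ in $X_{P'}$.
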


\begin{proof}
If $(g,h)\in\partial X_P$, then there is a set $P'\subsetneq P$ such that $h_x=0$ if $x\notin P'$. 
The claim follows from the inductive assumption and a version of Lemma~\ref{lem_DSS_equiv}.
More precisely,
\begin{align}
&\Big(\langle
\sigma\rangle^{P,\Lambda}_{h,t} 
-\langle
\sigma\rangle^{P,\Lambda}_{-h,t}\Big)
-\Big(\langle
\sigma\rangle^{P,\Lambda}_{g+h,t} 
-\langle
\sigma\rangle^{P,\Lambda}_{g-h,t}\Big)
= 
\Big(\widetilde{\langle
\sigma\rangle}^{P',\Lambda}_{h,t} 
-\widetilde{\langle
\sigma\rangle}^{P',\Lambda}_{-h,t}\Big)
-\Big(\widetilde{\langle
\sigma\rangle}^{P',\Lambda}_{\tilde g+h,t} 
-\widetilde{\langle
\sigma\rangle}^{P',\Lambda}_{\tilde g-h,t}\Big)
,
\end{align}
where the measure on the right-hand side has modified couplings
\begin{equation}
  \tilde A = A + \Big(\frac{1}{t}-1\Big)1_{P\setminus P'},
  \qquad
  \tilde g_x = g_x + \Big(\frac{1}{t}-1\Big)1_{x\in P \setminus P'}g_x,
\end{equation}
and \DSS$(P',\Lambda)$ can be applied to obtain the conclusion.
\end{proof}

\begin{lemma}[Initial condition] \label{lem_DSS_initial_P}
Assume \DSS$(\Lambda',\Lambda')$ for all $\Lambda' \subsetneq \Lambda$. Then, for $P \neq \varnothing$,
\begin{equation}
W^{P,\Lambda}_0(g,h)\geq 0\quad\text{entrywise for } (g,h) \in \R^\Lambda \times \R_+^\Lambda
.
\end{equation}
If $|\Lambda|=1$ then the conclusion holds only under the GHS condition \eqref{eq_prop_GHS} on $V$.
\end{lemma}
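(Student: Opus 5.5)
The plan is to compute $W_0^{P,\Lambda}$ explicitly, using that the $t=0$ measure is degenerate, and reduce its nonnegativity to two facts: the GHS condition \eqref{eq_prop_GHS} at sites of $P$, and the DSS inequality for the smaller system on $Q=\Lambda\setminus P$. Since $P\neq\varnothing$, we have $Q\subsetneq\Lambda$, so \DSS$(Q,Q)$ is available by assumption. Evaluated at $t=1$ and combined with Lemma~\ref{lem_DSS_equiv}, it gives the DSS inequality \eqref{eq_prop_DSS} for the system on $Q$ with coupling $A_{QQ}$ and potentials $(V_y)_{y\in Q}$. This system is admissible: $A_{QQ}$ is ferromagnetic and strictly positive definite as a principal submatrix, and the $V_y$ are unchanged.

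\emph{Step 1 (formula for $v_0^{P,\Lambda}$).} At $t=0$ we have $\sigma_P=u_P$. Write $-\tfrac12|\sigma-u|_Q^2-\sum_{y\in Q}U_y(\sigma_y) = (\sigma,u)_Q-\tfrac12|u|_Q^2-\sum_{y\in Q}V_y(\sigma_y)$. Then, up to an additive constant,
\begin{equation*}
v_0^{P,\Lambda}(u)=\tfrac12(u_P,A_{PP}u_P)+\sum_{x\in P}U_x(u_x)+\tfrac12|u|^2_Q-\log Z_Q\big(u_Q-A_{QP}u_P\big),
\end{equation*}
where $Z_Q(f)$ is the partition function of $\mu^{f}_{A_{QQ},V}$ on $\R^Q$. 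Differentiating gives:
\begin{itemize}
\item for $y\in Q$, $\partial_{u_y}v_0=u_y-\langle\sigma_y\rangle^Q_{f}$;
\item for $x\in P$, $\partial_{u_x}v_0=(A_{PP}u_P)_x+U_x'(u_x)+\sum_{y\in Q}A_{xy}\langle\sigma_y\rangle^Q_f$.
\end{itemize}
Here $f=u_Q-A_{QP}u_P$, and $\langle\cdot\rangle^Q$ is the expectation for the $Q$-system.

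\emph{Step 2 (compute $W_0$).} Using $\nabla_2V_0(u,v)=\nabla v_0(u+v)-\nabla v_0(u-v)$, the fields for $(u,v)=(g,h)$ are $G\pm H$, with
\begin{equation*}
G=g_Q-A_{QP}g_P,\qquad H=h_Q-A_{QP}h_P.
\end{equation*}
For $(u,v)=(0,h)$ the fields are $\pm H$. Crucially $H\geq0$ entrywise, because $h\geq0$ and $A_{QP}\leq0$ by ferromagneticity. The linear terms $2h$ and $2(A_{PP}h_P)_x$ cancel in the difference defining $W$. Set
\begin{equation*}
D_y=\big(\langle\sigma_y\rangle^Q_{G+H}-\langle\sigma_y\rangle^Q_{G-H}\big)-\big(\langle\sigma_y\rangle^Q_{H}-\langle\sigma_y\rangle^Q_{-H}\big).
\end{equation*}
We then get the following.
\begin{itemize}
\item For $y\in Q$: $W_0(g,h)_y=-D_y$.
\item For $x\in P$: $W_0(g,h)_x=\big[U'_x(g_x+h_x)-U'_x(g_x-h_x)-U'_x(h_x)+U'_x(-h_x)\big]+\sum_{y\in Q}A_{xy}D_y$.
\end{itemize}

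\emph{Step 3 (signs).} By the DSS inequality on $Q$ with fields $G$ and $H\geq0$, we have $D_y\leq0$, so $W_0(g,h)_y\geq0$ for $y\in Q$. For $x\in P$, the bracket equals the same expression with $V_x'$ in place of $U'_x$, since the $\tfrac12\sigma^2$ parts cancel. It is therefore nonnegative by \eqref{eq_prop_GHS}, using Remark~\ref{rk:GHS}. The second term is a sum of products $A_{xy}D_y$ with $A_{xy}\leq0$ and $D_y\leq0$, hence also nonnegative. If $Q=\varnothing$, only the GHS term remains and no induction hypothesis is needed; this is exactly the first part of Lemma~\ref{lem_DSS_equiv}. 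When $|\Lambda|=1$ this is the only possible case, which explains the final sentence of the lemma.

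\emph{Main obstacle.} The step I expect to need the most care is Step 1. One has to check that the degenerate $t=0$ measure genuinely yields a $Q$-system of the form $\mu^{f}_{A_{QQ},V}$, with the correct potential $V$ rather than $U$ and an effective field that is affine in $u$. One also has to track the signs of the cross terms $A_{QP}$ correctly, since these signs are what make $H\geq0$. Once that is in place, the rest is sign bookkeeping.
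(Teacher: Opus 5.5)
Your proposal is correct and is essentially the paper's proof. Like the paper, you reduce the degenerate $t=0$ integral to the system on $Q=\Lambda\setminus P$ with effective field $u_Q-A_{QP}u_P$, and your componentwise formulas ($-D_y$ for $y\in Q$, and the GHS bracket plus $\sum_{y\in Q}A_{xy}D_y$ for $x\in P$) are exactly the paper's decomposition $W_0^{P,\Lambda}=(\text{GHS term})-\tilde A^T(\cdots)$, where $\tilde A u=u_Q-A_{QP}u_P$ and $\tilde A\geq 0$, written out. Your explicit checks that $A_{QQ}$ is admissible and that the $\tfrac12\sigma^2$ parts cancel (so the bracket is the $V'$-expression in \eqref{eq_prop_GHS}) fill in details the paper leaves implicit.
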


\begin{proof}
From the definition~\eqref{eq_def_potential_DSS} of $V^{P,\Lambda}_t$, 
interpreted as a degenerate Gaussian if $t=0$:
\begin{align}
v_0^{P,\Lambda}(u)
&=
\sum_{x\in P} U_x(u_x)
\nnb
&\quad-
\log \int_{\R^{P^c}}\exp\bigg[  
- \frac{|\sigma-u|^2_{\Lambda\setminus P}}{2}
-\frac{1}{2}\big((\sigma_{P^c},u_P),A(\sigma_{P^c} ,u_P)\big)
-\sum_{x\in\Lambda\setminus P} U_x(\sigma_x)\bigg] \, d\sigma
.
\end{align}
For subsets $I,J\subset\Lambda$, write $A_{I,J}$ for the matrix $(A_{x,y})_{x\in I,y\in J}$. 
Expanding the quadratic forms and rearranging, 
we find that the integral term above is given by:
\begin{align}
&
-\frac{\big|A_{P^c,P}u_P\big|^2}{2} 
+(u_{P^c},A_{P^c,P}u_P)
+\frac{(u_P,A_{P,P}u_P)}{2} 
\nnb
&\qquad\qquad
-\log \int_{\R^{P^c}}\exp\bigg[  - \frac{|\sigma-u+A_{P^c,P}u_P|^2_{P^c}}{2}
-\frac{1}{2}\big(\sigma,A_{P^c,P^c}\, \sigma)\big) -\sum_{x\in P^c} U_x(\sigma_x)\bigg] \, d\sigma
.
\end{align}
The last integral is the partition function of the measure $\langle\cdot\rangle^{P^c,P^c}_{\tilde u,t=1}= \langle\cdot\rangle^{P^c}_{\tilde u,t=1}$ (recall its definition~\eqref{eq_def_product_measure_DSS}), 
with
\begin{equation}
\tilde u = u_{P^c}-A_{P^c,P}\, u_P = \tilde A u.
\end{equation}
Since $A$ is ferromagnetic, the $P^c \times \Lambda$ matrix $\tilde A$ has nonnegative entries.
Thus
\begin{equation}
  \nabla v_0^{P,\Lambda}(u)
  = (1_{x\in P} U_x'(u_x))_x - \tilde A^T \avg{\sigma}_{\tilde A u,1}^{P^c} + \text{(affine in $u$)}
\end{equation}
and therefore
\begin{align}
  \nabla_{v} V_0^{P,\Lambda}(u,v)
  &= (1_{x\in P} \pb{U_x'(u_x+v_x)-U_x'(u_x-v_x)})_x
  \nnb
  &\qquad
  - \tilde A^T \avg{\sigma}_{\tilde A(u+v),1}^{P^c}
  + \tilde A^T \avg{\sigma}_{\tilde A(u-v),1}^{P^c} + \text{(linear in $v$)}
\end{align}
and
\begin{align}
  W_0^{P,\Lambda}(g,h)
  &= \Big(1_{x\in P} \pb{U_x'(g_x+h_x)-U_x'(g_x-h_x)-(U_x'(h_x)-U_x'(-h_x))}\Big)_x
  \nnb
  &\qquad
  - \tilde A^T \pb{
  \avg{\sigma}_{\tilde A(g+h),1}^{P^c}
  -\avg{\sigma}_{\tilde A(g-h),1}^{P^c}
  -\pb{\avg{\sigma}_{\tilde Ah,1}^{P^c}
   -\avg{\sigma}_{-\tilde Ah,1}^{P^c}}
  }
  .
\end{align}
The GHS assumption \eqref{eq_prop_GHS} implies that the $U$ terms are nonnegative. 
The assumption that \DSS$(P^c,P^c)$ holds for $P^c \subsetneq \Lambda$ together with $\tilde A \geq 0$ entrywise shows that the second line is also nonnegative.
\end{proof}

\begin{proof}[Conclusion of the proof]
  The goal is to prove \DSS$(\Lambda,\Lambda)$. 
  By induction, assume that for all $\varnothing\neq \Lambda' \subsetneq \Lambda$ one has \DSS($\Lambda',\Lambda')$
  and that for all $P \subsetneq \Lambda$ one has \DSS$(P,\Lambda)$. 
  Then Proposition~\ref{prop_MP_DSS} together with Lemmas~\ref{lem_DSS_boundary}--\ref{lem_DSS_initial_P} imply \DSS($P,\Lambda$). 
  It therefore only remains to check the initial step, corresponding to proving \DSS$(\varnothing,\Lambda)$ for each $\Lambda$. 
  Notice however that \DSS$(\varnothing,\Lambda)$ is trivially true since $W^{\varnothing,\Lambda}_t(0,g)=0$ by Lemma~\ref{lemm_derivatives_V}, 
  for any $g\in\R^\Lambda$ and $t\geq 0$. 
\end{proof}


\section*{Acknowledgements}

We thank Nestor Guillen, Michael Hofstetter, Chuck Newman, and Ofer Zeitouni for various related discussions.

This work was supported in part by NSF grant DMS-2348045, the Simons Collaboration
grant on Probabilistic Paths to Quantum Field Theory,
and a grant of access to OpenAI models through the ChatGPT for Academic Researchers program.

\section*{AI statement}

The main idea for the proof, in particular the combination of Ginibre-type doubling and the maximum principle
as well as the use of the key quantity \eqref{eq_prop_DSS} identified in the paper \cite{MR4586225} instead of the two-point function directly,
 was found without any AI assistance. In the writing process, AI was used for checking, finding references
 for the maximum principle under sufficiently general conditions, but no other arguments.
In May 2026 and earlier, Gemini and ChatGPT 5.4 and 5.5 were used in preliminary brainstorming of the approach of using the maximum principle to prove various 
 correlation inequalities, including the FKG and GHS inequalities. For the GHS inequality, GPT-5.5 provided input on how to use
 induction to handle the boundary condition on the positive field.

\bibliography{all}
\bibliographystyle{plain}

\end{document}